\newtheorem{proposition}{Proposition}
\newtheorem{lemma}{Lemma}
\newcommand{\proba}{\mathds{P}}
\newcommand{\mean}{\mathds{E}}
\newcommand{\var}{\mathds{V}}
\newcommand{\smallo}{o}
\newcommand{\integers}{\mathds{Z}}
\newcommand{\reals}{\mathds{R}}
\newcommand{\hessian}{\operatorname{Hessian}}
\newcommand{\mC}{\mathcal{C}}
\newcommand{\mV}{\mathcal{V}}
\newcommand{\vect}[1]{\bm{#1}}
\newcommand{\vk}{\vect{k}}
\newcommand{\vtheta}{\vect{\theta}}
\newcommand*{\eg}{\textit{e.g.}\@\xspace}
\author{\'Elie de Panafieu\thanks{\href{mailto:depanafieuelie@gmail.com}{depanafieuelie@gmail.com}}, François Durand\thanks{\href{mailto:fradurand@gmail.com}{fradurand@gmail.com}}\\
{\normalsize Nokia Bell Labs France}}
\title{Limit Distribution of Two Skellam Distributions,\\Conditionally on Their Equality}
\begin{document}
\maketitle

This note provides a proof of the following proposition.

\begin{proposition}\label{th:main_result}
Let $A_n$, $B_n$, $C_n$ and $D_n$ denote independent random Poisson variables of respective parameters $n \tau_A$, $n \tau_B$, $n \tau_C$ and $n \tau_D$. We denote $X_n = (A_n - B_n \mid A_n - B_n = C_n - D_n)$.
\begin{enumerate}
\item\label{enumi:degenerateCase} If $\tau_A = \tau_B = 0$, or $\tau_B = \tau_C = 0$, or $\tau_C = \tau_D = 0$, or $\tau_D = \tau_A = 0$, then the distribution of $X_n$ is a Dirac measure in 0.
\item\label{enumi:generalCase} Otherwise,
\begin{align*}
	\mean(X_n) &= n E + E' + \smallo(1), \\
	\var(X_n) &= n V + V' + \smallo(1),
\end{align*}
where
\begin{align*}
E &= \frac{\tau_A \tau_C - \tau_B \tau_D}{\sqrt{(\tau_A + \tau_D)(\tau_B + \tau_C)}}, \\
E' &= - \frac{\tau_A \tau_C - \tau_B \tau_D}{4 (\tau_A + \tau_D)(\tau_B + \tau_C)}, \\
V &= \frac{(\tau_A\tau_C + 2\tau_A\tau_B + \tau_B\tau_D) (\tau_A\tau_C + 2\tau_C\tau_D + \tau_B\tau_D)}
          {2\big[(\tau_A + \tau_D)(\tau_B + \tau_C)\big]^{\frac{3}{2}}}, \\
V' &= - \frac{(\tau_A\tau_C + \tau_B\tau_D)(\tau_A\tau_B + \tau_C\tau_D) + 4\tau_A\tau_B\tau_C\tau_D}
             {4\big[(\tau_A + \tau_D)(\tau_B + \tau_C)\big]^2}.
\end{align*}
And the distribution of $(X_n - n E) / \sqrt{n V}$ is asymptotically Gaussian.
\end{enumerate}
\end{proposition}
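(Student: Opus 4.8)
Statement~\ref{enumi:degenerateCase} is elementary: if $\tau_A=\tau_B=0$ then $A_n-B_n\equiv 0$ and the conditioning only forces $C_n=D_n$, so $X_n\equiv 0$; if $\tau_B=\tau_C=0$ the event $A_n-B_n=C_n-D_n$ reads $A_n+D_n=0$, whence $A_n=D_n=0$ and again $X_n\equiv 0$; the other two cases are symmetric. These are exactly the four situations in which $V$ below vanishes, so they are rightly excluded from the Gaussian regime. For statement~\ref{enumi:generalCase}, set $p=\tau_A+\tau_D$ and $q=\tau_B+\tau_C$. The plan is to reduce everything to the asymptotics of a single pair of integrals. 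Writing the conditioning indicator as a Fourier integral, $\indic[A_n-B_n=C_n-D_n]=\frac{1}{2\pi}\int_{-\pi}^{\pi}e^{i\theta(A_n-B_n-C_n+D_n)}\,d\theta$, and using independence and the Poisson moment generating function, I would obtain for $M_n(t):=\mean(e^{tX_n})$ the exact formula
\[
M_n(t)=\frac{\int_{-\pi}^{\pi}e^{n\Phi_t(\theta)}\,d\theta}{\int_{-\pi}^{\pi}e^{n\Phi_0(\theta)}\,d\theta},\qquad
\Phi_t(\theta)=\tau_A(e^{t+i\theta}-1)+\tau_B(e^{-t-i\theta}-1)+\tau_C(e^{-i\theta}-1)+\tau_D(e^{i\theta}-1),
\]
valid for $t$ in a complex neighbourhood of $0$, with $\Phi_0(\theta)=p(e^{i\theta}-1)+q(e^{-i\theta}-1)$.

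\textbf{Saddle point and Laplace expansion.} The saddle of $\Phi_t$ solves $e^{2i\theta}=(\tau_Be^{-t}+\tau_C)/(\tau_Ae^{t}+\tau_D)$; at $t=0$ it lies at $\theta^*=\tfrac{i}{2}\log(p/q)$. Since the integrand is entire and $2\pi$-periodic, I would shift the contour onto the horizontal line through the saddle, along which $\Phi_0$ reduces to the purely real function $2\sqrt{pq}\cos x-(p+q)$: this is a genuine Laplace integral, maximal at the saddle with exponentially small tails, and the $t\neq0$ integrals are treated on the corresponding shifted lines. Applying the saddle-point expansion to numerator and denominator, the leading exponentials and Gaussian prefactors combine into
\[
\log M_n(t)=n\,\Lambda(t)+B(t)+\smallo(1),\qquad
\Lambda(t)=\Phi_t(\theta^*(t))-\Phi_0(\theta^*(0)),\qquad
B(t)=\tfrac12\log\frac{\Phi_0''(\theta^*(0))}{\Phi_t''(\theta^*(t))},
\]
uniformly for $t$ near $0$, where $\theta^*(t)$ is the saddle and $\Phi_t''$ the second $\theta$-derivative; note $B(0)=0$. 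Being analytic and uniform, this expansion may be differentiated term by term at $t=0$, so that $E=\Lambda'(0)$, $V=\Lambda''(0)$, $E'=B'(0)$ and $V'=B''(0)$.

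\textbf{Extracting the constants and the CLT.} Because $\partial_\theta\Phi_t$ vanishes at the saddle, $\Lambda'(t)=\partial_t\Phi_t(\theta^*(t))$. At $t=0$ one has $e^{i\theta^*}=\sqrt{q/p}$, $\Phi_0''(\theta^*)=-2\sqrt{pq}$ and, crucially, $\Phi_0'''(\theta^*)=0$. These give $E=\Lambda'(0)=(\tau_Aq-\tau_Bp)/\sqrt{pq}$, which equals the stated $E$, and one further differentiation (using $\dot\theta^*(0)$ read off from the saddle equation) yields $V=\Lambda''(0)$ in the stated form, its two factors being $\tau_Aq+\tau_Bp$ and $2pq-(\tau_Aq+\tau_Bp)$. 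The subleading constants come from $B$: since $\Phi_0'''(\theta^*)=0$, $E'=B'(0)=-E/(4\sqrt{pq})$ matches the stated $E'$, while $V'=B''(0)$ gives the stated $V'$ after a longer but mechanical computation. Finally, replacing $t$ by $it/\sqrt{nV}$ and using $\Lambda(0)=0$, $\Lambda'(0)=E$, $\Lambda''(0)=V$, $B(0)=0$,
\[
\log\mean\big(e^{\,it(X_n-nE)/\sqrt{nV}}\big)=-\frac{itnE}{\sqrt{nV}}+n\,\Lambda\Big(\tfrac{it}{\sqrt{nV}}\Big)+\smallo(1)=-\frac{t^2}{2}+\smallo(1)
\]
for each fixed $t$, so Lévy's continuity theorem gives the asymptotic normality of $(X_n-nE)/\sqrt{nV}$.

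\textbf{Main obstacle.} The difficulties are analytic rather than structural: justifying the saddle-point expansion \emph{uniformly} in the complex parameter $t$ near $0$ (the contour deformation onto the steepest-descent line, exponential tail estimates, and the legitimacy of differentiating the asymptotic expansion at $t=0$), and managing the order-$1/n$ bookkeeping required for $E'$ and especially $V'$, where the leading $n^2$ contributions to $\mean(X_n^2)$ and $\mean(X_n)^2$ must cancel exactly to leave a variance of order $n$.
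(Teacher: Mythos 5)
Your proof is correct, but it follows a genuinely different route from the paper. The paper works with the probability generating function $P_n(u)=F_n(u)/F_n(1)$ written as a lattice sum (a triple sum over $b,d,g$ after eliminating $a,c$), applies Stirling's formula and a multivariate discrete Laplace method, and then invokes Hwang's quasi-powers theorem; because the exponent $\phi_u$ contains $\log\tau_B$ and $\log\tau_D$, the paper must treat the cases $\tau_B=0$ and $\tau_B=\tau_D=0$ separately, and it normalizes $\tau_A\tau_C\ge\tau_B\tau_D$ so that the critical point lies in the interior of the summation domain. You instead represent the conditioning indicator by $\frac{1}{2\pi}\int_{-\pi}^{\pi}e^{i\theta(A_n-B_n-C_n+D_n)}d\theta$, which turns $\mean(e^{tX_n})$ into a ratio of two one-dimensional contour integrals, and you run a single saddle-point analysis. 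The two computations agree exactly at the level of the exponent: your $\Phi_t(\theta^*(t))=2\sqrt{\gamma(e^t)}-(\tau_A+\tau_B+\tau_C+\tau_D)$ and $\Phi_t''(\theta^*(t))=-2\sqrt{\gamma(e^t)}$ with $\gamma(u)=(\tau_A\sqrt u+\tau_D/\sqrt u)(\tau_C\sqrt u+\tau_B/\sqrt u)$, so your $\Lambda$ and $B$ are precisely the paper's $f(s)=2(\sqrt{\gamma(e^s)}-\sqrt{\gamma(1)})$ and $g(s)=-\frac14(\log\gamma(e^s)-\log\gamma(1))$; I checked that your identities $\tau_Aq-\tau_Bp=\tau_A\tau_C-\tau_B\tau_D$, $\tau_Aq+\tau_Bp=\tau_A\tau_C+2\tau_A\tau_B+\tau_B\tau_D$ and $2pq-(\tau_Aq+\tau_Bp)=\tau_A\tau_C+2\tau_C\tau_D+\tau_B\tau_D$ reproduce the stated $E$, $E'$ and $V$. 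What your approach buys: the degenerate subcases and the sign normalization disappear (only $p>0$ and $q>0$ are needed, which follow from the non-degeneracy hypotheses), and a three-dimensional Laplace analysis collapses to a one-dimensional steepest-descent integral along which $\Phi_t$ is exactly $2\sqrt{\gamma(e^t)}\cos x-\sum\tau$, i.e.\ real for real $t$. What the paper's approach buys: it stays with real sums (no contour deformation) and delegates the mean/variance extraction and the CLT to the citable quasi-powers lemma, whereas you re-derive that lemma by hand via Cauchy estimates and L\'evy continuity. The analytic work you flag as the main obstacle --- uniformity of the saddle-point expansion in the complex parameter $t$ --- is exactly the hypothesis the paper also needs (and glosses over) in order to apply quasi-powers with complex $s$, so the two proofs are on an equal footing there.
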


Since case \ref{enumi:degenerateCase} is trivial, we only need to prove case \ref{enumi:generalCase} and thus assume that $\tau_A + \tau_B > 0$ and $\tau_B + \tau_C > 0$ and $\tau_C + \tau_D > 0$ and $\tau_D + \tau_A > 0$.

Up to exchanging roles between $A_n$ and $B_n$, and between $C_n$ and $D_n$, we also assume that $\tau_A \tau_C \ge \tau_B \tau_D$. Intuitively, it means that in $X_n = (A_n - B_n \mid A_n - B_n = C_n - D_n)$, the ``positive'' forces $A_n$ and $C_n$ are stronger than the ``negative'' ones, $B_n$ and $D_n$. It corresponds to the cases where $E$, the main term in the asymptotic development of the expectation, will be proved nonnegative.

\section{Generic Case: All Coefficients Are Positive}\label{sec:positive_coefs}

For this section, we add the assumption that $\tau_A$, $\tau_B$, $\tau_C$ and $\tau_D$ are all positive. This assumption will be removed in the second section.

Let $P_n(u)$ denote the probability generating function
\[
    P_n(u) = \sum_{m \in \integers} \proba(A_n - B_n = m \mid A_n - B_n = C_n - D_n) u^m.
\]

Introducing the function
\[
    F_n(u) =
    \sum_{\substack{a, b, c, d \geq 0\\ a - b = c - d}}
    \proba(A_n = a) \proba(B_n = b) \proba(C_n = c) \proba(D_n = d)
    u^{a - b},
\]
we obtain
\[
    P_n(u) = \frac{F_n(u)}{F_n(1)}.
\]

Expressing the probabilities explicitly, the expression becomes
\[
    F_n(u) =
    e^{-n (\tau_A + \tau_B + \tau_C + \tau_D)}
    \sum_{\substack{a, b, c, d \geq 0\\ a - b = c - d}}
    \frac{(n \tau_A)^a}{a!}
    \frac{(n \tau_B)^b}{b!}
    \frac{(n \tau_C)^c}{c!}
    \frac{(n \tau_D)^d}{d!}
    u^{a-b}.
\]

Introducing the variable $g = a - b = c - d$, we obtain
\[
    F_n(u) =
    e^{-n (\tau_A + \tau_B + \tau_C + \tau_D)}
	G_n(u),
\]
where
\[
	G_n(u) = 
    \sum_{\substack{b, d \ge 0 \\ g \ge max(-b, -d)}}
    \frac{(n \tau_A)^{(b + g)}}{(b + g)!}
    \frac{(n \tau_B)^b}{b!}
    \frac{(n \tau_C)^{(d + g)}}{(d + g)!}
    \frac{(n \tau_D)^d}{d!}
    u^g.
\]

The Stirling approximation is introduced and we define the values
$x = \frac{b}{n}$, $y = \frac{d}{n}$, $z = \frac{g}{n}$. Then:
\[
    G_n(u) =  
    \sum_{\substack{b, d \ge 0 \\ g \ge max(-b, -d)}}
    \psi_n(x,y,z)
    e^{- n \phi_u(x,y,z)},
\]
where
\begin{align*}
    \psi_n(x,y,z) =\ &
	\frac{(n (x+z))^{n (x+z)} e^{- n (x+z)}}{(n (x+z))!}
    \frac{(n x)^{n x} e^{- n x}}{(n x)!}
    \frac{(n (y+z))^{n (y+z)} e^{- n (y+z)}}{(n (y+z))!}
    \frac{(n y)^{n y} e^{- n y}}{(n y)!},
    \\
    \phi_u(x,y,z) =\ &
    x \log(x) - x (1 + \log(\tau_B))
    + (x+z) \log(x+z) - (x+z) (1 + \log(\tau_A))
    \\&
    + y \log(y) - y (1 + \log(\tau_D))
    + (y+z) \log(y+z) - (y+z) (1 + \log(\tau_C)) 
    - z \log(u).
\end{align*}
The above expression is well defined because we assumed that $\tau_A$, $\tau_B$, $\tau_C$ and $\tau_D$ are all positive.

We will soon see that
the main contributions to $G_n(u)$
come from the vicinity of the minimum of $\phi_u$.
We will first compute this main contribution,
then prove that the rest of the sum is negligible.

The function $\phi_u$ is convex with a unique minimum. 
Therefore, there is a small enough vicinity $\mV$ of this minimum such that $\phi_u$
is larger anywhere outside this vicinity than anywhere inside.
Without loss of generality, we assume that $\mV$
does not contain the origin.
Thus, uniformly on $\mV$, we have
\[
    \psi_n(x,y,z) \sim
    \frac{\psi(x,y,z)}{(2 \pi n)^2}
    \quad \text{where} \quad
    \psi(x,y,z) := \frac{1}{\sqrt{x (x+z) y (y+z)}}.
\]


We apply the following classical lemma
(Laplace method) 
to extract the asymptotics.

\begin{lemma}[Laplace Method] \label{thm:laplaceMethod}
  Consider a compact set $\mC$ of $\reals^d$ and the series
  \[
  I_n = \sum_{\substack{\vk \in \integers_{\geq 0}^d\\ \vk/n \in \mC}} \psi(\vk / n) e^{- n \phi(\vk / n)}
  \]
  where $\psi, \phi$ are differentiable functions from $\mC$ to $\reals$.
  Assume furthermore that
  $\phi$ has a unique global minimum $\vtheta^*$
    which is located in the interior of $\mC$,
    $\phi$ is three-times differentiable in a neighborhood of $\vtheta^*$,
    $\hessian_{\phi}(\vtheta^*) > 0$ and $\psi(\vtheta^*) \neq 0$.
  Then
  \[
  I_n \sim (2 \pi n)^{d/2} \psi(\vtheta^*) \frac{e^{-n \phi(\vtheta^*)}}{\sqrt{\hessian_{\phi}(\vtheta^*)}}.
  \]
\end{lemma}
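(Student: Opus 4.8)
The plan is to reduce the lattice sum to a Gaussian integral through a local analysis around $\vtheta^*$, treating the sum as a Riemann sum of mesh $1/n$. Writing $H := \hessian_\phi(\vtheta^*)$ for the (positive-definite) Hessian matrix, whose determinant is what the notation $\hessian_\phi(\vtheta^*)$ denotes inside the square root, and factoring out the dominant exponential, it suffices to show that
\[
J_n := I_n\, e^{n \phi(\vtheta^*)} = \sum_{\substack{\vk \in \integers_{\geq 0}^d \\ \vk/n \in \mC}} \psi(\vk/n)\, e^{-n(\phi(\vk/n) - \phi(\vtheta^*))} \sim (2\pi n)^{d/2}\, \frac{\psi(\vtheta^*)}{\sqrt{\det H}}.
\]
Since $\vtheta^*$ lies in the interior of $\mC$, I would fix a radius $\delta > 0$ so that the closed ball $\overline{B}(\vtheta^*,\delta)$ is contained in $\mC$ and is small enough for the quadratic approximation below to be accurate, then split $J_n$ into a near contribution over the lattice points with $\vk/n \in \overline{B}(\vtheta^*,\delta)$ and a far contribution over the rest.

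For the far contribution, I would show it is exponentially negligible. The function $\phi$ is continuous on the compact set $\mC \setminus B(\vtheta^*,\delta)$ and its unique global minimum is excluded there, so $\phi \geq \phi(\vtheta^*) + \varepsilon$ on that set for some $\varepsilon > 0$. Since $\psi$ is continuous on the compact $\mC$ it is bounded, and the number of lattice points with $\vk/n \in \mC$ is $\bigO(n^d)$; hence the far part is $\bigO\!\big(n^d e^{-n\varepsilon}\big)$, which is exponentially small compared with the target order $n^{d/2}$ and therefore negligible.

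For the near contribution, I would use $\nabla\phi(\vtheta^*) = \vzero$ (interior minimum) together with three-times differentiability to write, for $\vk/n \in \overline{B}(\vtheta^*,\delta)$,
\[
\phi(\vk/n) - \phi(\vtheta^*) = \tfrac12 (\vk/n - \vtheta^*)^\top H (\vk/n - \vtheta^*) + R(\vk/n), \qquad |R(\vtheta)| \leq M\,\|\vtheta - \vtheta^*\|^3,
\]
where $M$ bounds the third derivatives on the ball. Rescaling by $\vu := \sqrt{n}\,(\vk/n - \vtheta^*)$ turns the quadratic term into $\tfrac12 \vu^\top H \vu$ and bounds the remainder by $n|R| \leq M\|\vu\|^3/\sqrt{n} \leq M\delta\,\|\vu\|^2$; choosing $\delta < \lambda_{\min}(H)/(4M)$ makes this a small fraction of the quadratic form, so the summand is dominated uniformly by $e^{-c\|\vu\|^2}$ for some $c > 0$. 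The rescaled points $\vu$ form a shifted lattice of mesh $1/\sqrt{n}$, each cell of volume $n^{-d/2}$; replacing $\psi(\vk/n)$ by $\psi(\vtheta^*)$ up to a $1 + \smallo(1)$ factor by continuity, dominated convergence shows that $n^{-d/2}\sum e^{-\frac12 \vu^\top H \vu}$ converges to the Gaussian integral $\int_{\reals^d} e^{-\frac12 \vu^\top H \vu}\,d\vu = (2\pi)^{d/2}/\sqrt{\det H}$. Multiplying back by $n^{d/2}\psi(\vtheta^*)$ and using $(2\pi n)^{d/2} = (2\pi)^{d/2} n^{d/2}$ yields the claimed equivalent.

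The main obstacle is this near-part step: making rigorous that the rescaled lattice sum converges to the Gaussian integral. This requires simultaneously (i) controlling the cubic Taylor remainder uniformly on the ball so the summand stays dominated by an integrable Gaussian envelope, (ii) replacing $\psi(\vk/n)$ by $\psi(\vtheta^*)$ uniformly, and (iii) justifying the Riemann-sum-to-integral passage for a lattice whose offset $n\vtheta^* \bmod 1$ drifts with $n$. All three are absorbed into the dominated-convergence setup above, but it is the uniformity of the bounds, rather than mere pointwise convergence of the summand, that makes the argument valid.
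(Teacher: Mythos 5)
Your argument is correct, but it takes a different route from the paper, which does not prove the lemma directly at all: it reduces the sum to a Riemann integral (citing Masoero for the one-dimensional justification of that step and asserting the multivariate case is analogous) and then invokes the multivariate Laplace method for integrals from Pemantle--Wilson. You instead give a self-contained proof that never passes through an integral version of the Laplace method: you localize to a ball around $\vtheta^*$, kill the far contribution by compactness and the uniqueness of the global minimum, and then handle the near contribution by rescaling the lattice by $\sqrt{n}$ and passing directly to the Gaussian integral via dominated convergence. What your approach buys is transparency and independence from the cited results, and you correctly identify the one genuinely delicate point — the uniformity needed to pass from the drifting rescaled lattice sum to the integral, which is exactly the step the paper outsources to Masoero. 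What the paper's route buys is brevity and, via the general integral Laplace machinery, easier access to full asymptotic expansions rather than just the leading term. Two minor points worth tightening in your write-up: the bound $|R(\vtheta)| \leq M\|\vtheta - \vtheta^*\|^3$ requires the third derivatives to be bounded on the ball, so you should either read the hypothesis as $C^3$ near $\vtheta^*$ or shrink $\delta$ accordingly; and your parenthetical that $\hessian_\phi(\vtheta^*)$ inside the square root means $\det H$ is the right reading of the paper's notation and deserves to be stated as such rather than assumed silently.
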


\begin{proof}
  There are many variants of this classic result.
  The one-dimensional case ($d = 1$) is treated by \cite{masoero15}.
  There, the approximation of the sum by a Riemann integral is justified.
  The same transformation applies to the multivariate case.
  The asymptotics of the triple integral is then obtained
  by a multivariate Laplace method, see \eg \cite{PW13}.
\end{proof}


Denoting $x^*, y^*, z^*$ the minimal point of $\phi_u$, we conclude that the contribution from $\mV$ to the sum $G_n(u)$ has an asymptotics of the form $e^{- n \phi(x^\star,y^\star,z^\star)}$
multiplied by a polynomial term.
Because $\phi_u$ outside of $\mV$ is larger than
$\phi_u(x^\star,y^\star,z^\star) + \epsilon$
for some positive $\epsilon$,
we conclude that the contribution of the rest of the sum
is exponentially small and therefore negligible in the asymptotics.
(Also: outside of $\mV$, we use Stirling bounds to bound $\psi_n(x,y,z)$).

In order to get the asymptotics of $G_n(u)$, what remains to do is to evaluate $\psi(x^*, y^*, z^*)$, $\phi_u(x^*, y^*, z^*)$ and $\hessian_{\phi_u}(x^*, y^*, z^*)$. The minimal point of $\phi_u$ is characterized by the system
\begin{align*}
	x^* (x^* + z^*) &= \tau_A \tau_B,\\
	y^* (y^* + z^*) &= \tau_C \tau_D,\\
	(x^* + z^*) (y^* + z^*) &= \tau_A \tau_C u.
\end{align*}

The solution is given by:
\begin{align*}
x^* &= \frac{\tau_B}{\sqrt{u}} \sqrt{\frac{ \tau_A \sqrt{u} + \frac{\tau_D}{\sqrt{u}} }{ \tau_C \sqrt{u} + \frac{\tau_B}{\sqrt{u}} }}, \\
y^* &= \frac{\tau_D}{\sqrt{u}} \sqrt{\frac{ \tau_C \sqrt{u} + \frac{\tau_B}{\sqrt{u}} }{ \tau_A \sqrt{u} + \frac{\tau_D}{\sqrt{u}} }}, \\
z^* &= \frac{ \tau_A \tau_C u - \frac{\tau_B \tau_D}{u} }{ \sqrt{ \left(\tau_A \sqrt{u} + \frac{\tau_D}{\sqrt{u}}\right)\left(\tau_C \sqrt{u} + \frac{\tau_B}{\sqrt{u}}\right) } }.
\end{align*}
We have $x^* > 0$ and $y^* > 0$. Moreover, the assumption $\tau_A \tau_C \geq \tau_B \tau_D$ ensures that for $u$ close enough to 1, $z^* > \max(-x^*, -y^*)$. As a consequence, $(x^*, y^*, z^*)$ is on the interior of the integration zone defining $G_n(u)$, which validates the approximation by a Riemann integral mentioned above.

Simple algebra leads to:
\begin{align*}
\psi(x^*, y^*, z^*) &= \frac{1}{\sqrt{\tau_A \tau_B \tau_C \tau_D}}, \\
\phi_u(x^*, y^*, z^*) &= -2 \sqrt{ \left(\tau_A \sqrt{u} + \frac{\tau_D}{\sqrt{u}}\right)\left(\tau_C \sqrt{u} + \frac{\tau_B}{\sqrt{u}}\right) },\\
\hessian_{\phi_u}(x^*, y^*, z^*) &= \frac{2}{\tau_A \tau_B \tau_C \tau_D} \sqrt{ \left(\tau_A \sqrt{u} + \frac{\tau_D}{\sqrt{u}}\right)\left(\tau_C \sqrt{u} + \frac{\tau_B}{\sqrt{u}}\right) }. \\
\end{align*}

Applying Lemma~\ref{thm:laplaceMethod}, we then have:
\[
	G_n(u) \sim 
	\frac{1}{2\sqrt{\pi n}} 
	\gamma(u)^{-\frac{1}{4}}
	\exp \left( 2n \sqrt{\gamma(u)} \right),
\]
where
\[
	\gamma(u) = \left(\tau_A \sqrt{u} + \frac{\tau_D}{\sqrt{u}}\right)\left(\tau_C \sqrt{u} + \frac{\tau_B}{\sqrt{u}}\right).
\]

To obtain the convergence in distribution to a Gaussian law,
we will apply the \emph{Quasi-powers Theorem}, due to~\cite{H98},
which proof is also given by~\cite{FS09} (Lemma~IX.1)
(we use a slightly weaker version because we are not interested into the speed of convergence).

\begin{lemma}[Quasi-powers] \label{th:quasi_powers}
Assume that the Laplace transform $\mean(e^{s X_n})$
of a sequence of random variables $X_n$
is analytic in a neighborhood of $0$,
and has an asymptotics of the form
\[
  \mean(e^{s X_n}) \underset{n \to +\infty}{\sim} e^{\beta_n f(s) + g(s)},
\]
with $\beta_n \to +\infty$ as $n \to +\infty$,
and $f(s)$, $g(s)$ analytic on a neighborhood of $0$.
Assume also the condition $f''(0) \neq 0$.
Under these assumptions, the mean and variance of $X_n$ satisfy
\begin{align*}
  \mean(X_n) &= \beta_n f'(0) + g'(0) + \smallo(1),\\
  \var(X_n) &= \beta_n f''(0) + g''(0) + \smallo(1),
\end{align*}
and the distribution of $(X_n - \beta_n f'(0)) / \sqrt{\beta_n f''(0)}$
is asymptotically Gaussian.
\end{lemma}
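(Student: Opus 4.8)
The plan is to transfer everything to the cumulant generating function $K_n(s) = \log \mean(e^{s X_n})$, which under the hypothesis satisfies $K_n(s) = \beta_n f(s) + g(s) + \smallo(1)$, the error being uniform on some complex disc $|s| \le r$. (This uniform reading of the asymptotic equivalence, together with the assumed analyticity, is what makes the statement provable; real-pointwise equivalence alone would not control derivatives.) Since $\mean(e^{0 \cdot X_n}) = 1$ we have $K_n(0) = 0$ for all $n$, and as $\beta_n \to +\infty$ this forces the normalization $f(0) = g(0) = 0$, which I use repeatedly below. I then treat the two moment statements and the Gaussian limit in turn.

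For the moments, I would use the standard cumulant identities $\mean(X_n) = K_n'(0)$ and $\var(X_n) = K_n''(0)$, valid because the Laplace transform is analytic near $0$. It then suffices to differentiate the asymptotic relation. Setting $h_n(s) = K_n(s) - \beta_n f(s) - g(s)$, the hypothesis gives $\sup_{|s| \le r} |h_n(s)| \to 0$, and since $h_n$ is analytic on the disc, the Cauchy estimates $h_n^{(j)}(0) = \frac{j!}{2\pi \mathrm{i}} \oint_{|s| = r} h_n(s)\, s^{-j-1}\, ds$ give $h_n'(0) \to 0$ and $h_n''(0) \to 0$. These are exactly $\mean(X_n) = \beta_n f'(0) + g'(0) + \smallo(1)$ and $\var(X_n) = \beta_n f''(0) + g''(0) + \smallo(1)$.

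For the Gaussian limit I would standardize. First note $f''(0) > 0$: the variance asymptotic just obtained, combined with $\var(X_n) \ge 0$ and $\beta_n \to +\infty$, gives $f''(0) \ge 0$, hence $f''(0) > 0$ by the hypothesis $f''(0) \neq 0$. Set $\sigma_n = \sqrt{\beta_n f''(0)}$ and $Y_n = (X_n - \beta_n f'(0))/\sigma_n$, and evaluate the Laplace transform of $Y_n$ at a fixed real $t$: writing $s = t/\sigma_n \to 0$, we get $\mean(e^{t Y_n}) = \exp\!\big({-}t \beta_n f'(0)/\sigma_n + K_n(t/\sigma_n)\big)$. Inserting $K_n(s) = \beta_n f(s) + g(s) + \smallo(1)$ and Taylor-expanding $f$ to order three about $0$ (with $f(0) = 0$), the centering cancels the term $\beta_n f'(0)\, s$, the quadratic term yields $\tfrac{1}{2} \beta_n f''(0)\, t^2/\sigma_n^2 = t^2/2$ exactly, the cubic remainder is $\beta_n \bigO(\sigma_n^{-3}) = \bigO(\beta_n^{-1/2}) \to 0$, and $g(t/\sigma_n) \to g(0) = 0$. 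Hence $\mean(e^{t Y_n}) \to e^{t^2/2}$ for $t$ near $0$, and the continuity theorem for Laplace transforms (or, equivalently, substituting $s = \mathrm{i} t$ and invoking Lévy's continuity theorem for the resulting characteristic functions) shows $Y_n$ converges in distribution to a standard Gaussian.

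The main obstacle is the differentiation step rather than any individual computation: establishing $K_n^{(j)}(0) = \beta_n f^{(j)}(0) + g^{(j)}(0) + \smallo(1)$ from the asymptotic equivalence of the transforms. This is exactly where analyticity is essential — uniform convergence of analytic functions on a complex disc upgrades to convergence of all derivatives through the Cauchy estimates, whereas mere real-pointwise asymptotics would give no such control. A secondary point is keeping the Taylor remainder in the last step uniform as $s = t/\sigma_n$ shrinks, but this is automatic: for each fixed $t$ the point $t/\sigma_n$ eventually lies inside the fixed disc, on which $f'''$ is bounded.
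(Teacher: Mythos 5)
The paper does not actually prove this lemma: it is imported as a known result from \cite{H98}, with a proof given in \cite{FS09} (Lemma~IX.1). Your argument is correct and is essentially that standard proof — pass to the cumulant generating function, use Cauchy's integral formula on a complex disc where the error term is uniformly small to transfer the asymptotics to the first two derivatives at $0$, then standardize and invoke a continuity theorem. You were also right to flag that the pointwise relation ``$\sim$'' as written is too weak to differentiate and must be read as uniform on a complex neighborhood of $0$; that is exactly how the hypothesis is stated in the cited references.
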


We apply Lemma~\ref{th:quasi_powers} to $X_n = (A_n - B_n \ |\ A_n - B_n = C_n - D_n)$. Using the asymptotics of $G_n$, we have:
\[
	\mean(e^{s X_n}) 
	= \frac{F_n(e^s)}{F_n(1)}
	\underset{n \to +\infty}{\sim} \exp \left[
		2 n \left( \sqrt{\gamma(e^s)} - \sqrt{\gamma(1)} \right)
		- \frac{1}{4} \left( \log(\gamma(e^s)) - \log(\gamma(1)) \right)
	\right].
\]

The result of Proposition~\ref{th:main_result}
is then obtained by application of Lemma~\ref{th:quasi_powers}
with
\begin{align*}
	\beta_n &= n,\\
	f(s) &= 2 \left( \sqrt{\gamma(e^s)} - \sqrt{\gamma(1)} \right), \\
	g(s) &= - \frac{1}{4} \left( \log(\gamma(e^s)) - \log(\gamma(1)) \right).
\end{align*}
The assumptions $\tau_A + \tau_B > 0$, $\tau_B + \tau_C > 0$, $\tau_C + \tau_D > 0$ and $\tau_D + \tau_A > 0$ ensure that 
$$
f''(0) = \frac{(\tau_A\tau_C + 2\tau_A\tau_B + \tau_B\tau_D) (\tau_A\tau_C + 2\tau_C\tau_D + \tau_B\tau_D)}
          {2\big[(\tau_A + \tau_D)(\tau_B + \tau_C)\big]^{\frac{3}{2}}}
$$
is positive.

\section{Degenerate Case: Some Coefficients Are Zero}

We now consider the case where one or several coefficients $\tau$ vanish. Considering our assumptions $\tau_A + \tau_B > 0$ and $\tau_B + \tau_C > 0$ and $\tau_C + \tau_D > 0$ and $\tau_D + \tau_A > 0$ and $\tau_A \tau_C \ge \tau_B \tau_D$, there are only two cases, up to symmetries:
\begin{itemize}
\item $\tau_B = 0$ and the other coefficients are positive,
\item $\tau_B = \tau_D = 0$ and the other coefficients are positive.
\end{itemize}

In both cases, the proof is based on the same principle as in the first section. The main difference is that the triple sum is replaced by a double sum in the first case, and by a simple sum in the second case.

\subsection{$\tau_B = 0$ and the other coefficients are positive}

The probability generating function becomes
\[
    P_n(u) = \sum_{m \in \integers} \proba(A_n = m \mid A_n = C_n - D_n) u^m.
\]
Introducing the function
\[
    F_n(u) =
    \sum_{\substack{a, c, d \geq 0\\ a = c - d}}
    \proba(A_n = a) \proba(C_n = c) \proba(D_n = d)
    u^{a},
\]
we obtain
\[
    P_n(u) = \frac{F_n(u)}{F_n(1)}.
\]
Expressing the probabilities explicitly, the expression becomes
\[
    F_n(u) =
    e^{-n (\tau_A + \tau_C + \tau_D)}
    \sum_{\substack{a, c, d \geq 0\\ a = c - d}}
    \frac{(n \tau_A)^a}{a!}
    \frac{(n \tau_C)^c}{c!}
    \frac{(n \tau_D)^d}{d!}
    u^{a}
\]
and we obtain
\[
    F_n(u) =
    e^{-n (\tau_A + \tau_C + \tau_D)}
	G_n(u),
\]
where
\[
    G_n(u) = 
    \sum_{a, d \ge 0}
    \frac{(n \tau_A)^{a}}{a!}
    \frac{(n \tau_C)^{a + d}}{(a+d)!}
    \frac{(n \tau_D)^d}{d!}
    u^a.
\]
The Stirling approximation is introduced and we define the values
$x = \frac{a}{n}$ and $y = \frac{d}{n}$. Then:
\[
    G_n(u) =  
    \sum_{a, d \geq 0}
    \psi_n(x,y)
    e^{- n \phi_u(x,y)},
\]
where
\begin{align*}
    \psi_n(x,y) =\ &
    \frac{(n x)^{n x} e^{- n x}}{(n x)!}
    \frac{(n (x+y))^{n (x+y)} e^{- n (x+y)}}{(n (x+y))!}
    \frac{(n y)^{n y} e^{- n y}}{(n y)!},
    \\
    \phi_u(x,y) =\ &
    x \log(x) - x (1 + \log(\tau_A))
    + (x+y) \log(x+y) - (x+y) (1 + \log(\tau_C))
    \\&
    + y \log(y) - y (1 + \log(\tau_D))
    - x \log(u).
\end{align*}
The minimum of $\phi_u$ is obtained for:
\begin{align*}
x^* &= \frac{\tau_A \tau_C u}{\sqrt{ \tau_A \tau_C u + \tau_C \tau_D }}, \\
y^* &= \frac{\tau_C \tau_D}{\sqrt{ \tau_A \tau_C u + \tau_C \tau_D }}. \\
\end{align*}
This leads to:
\begin{align*}
\psi_n(x^*, y^*) &\sim \frac{1}{(2 \pi n)^\frac{3}{2}} \frac{ (\tau_A \tau_C u + \tau_C \tau_D)^\frac{1}{4} }{\sqrt{ \tau_A {\tau_C}^2 \tau_D u }}, \\
\phi_u(x^*, y^*) &= - 2 \sqrt{\tau_A \tau_C u + \tau_C \tau_D},\\
\hessian_{\phi_u}(x^*, y^*) &= 2 \frac{ \tau_A \tau_C u + \tau_C \tau_D }{ \tau_A {\tau_C}^2 \tau_D u }. \\
\end{align*}
Applying the same reasonning as in the previous section
and Lemma~\ref{thm:laplaceMethod}, we obtain
\[
	G_n(u) \sim 
	\frac{1}{2\sqrt{\pi n}} 
	\gamma(u)^{-\frac{1}{4}}
	\exp \left( 2n \sqrt{\gamma(u)} \right),
\]
where $\gamma(u)$ has the same expression as in Section~\ref{sec:positive_coefs}, applied to the particular case $\tau_B = 0$. From this point, the end of the proof is the same as in Section~\ref{sec:positive_coefs}.

\subsection{$\tau_B = \tau_D = 0$ and the other coefficients are positive}

In this case, we have
\[
    F_n(u) =
    e^{-n (\tau_A + \tau_C)}
    \sum_{\substack{a, c \geq 0\\ a = c}}
    \frac{(n \tau_A)^a}{a!}
    \frac{(n \tau_C)^c}{c!}
    u^{a},
\]
which leads to a simple sum (instead of a double or triple sum):
\[
    G_n(u) = 
    \sum_{a \ge 0}
    \frac{(n \tau_A)^{a}}{a!}
    \frac{(n \tau_C)^{a}}{a!}
    u^a.
\]
As usual, we define the value $x = \frac{a}{n}$ and obtain
\[
    G_n(u) =  
    \sum_{a \geq 0}
    \psi_n(x)
    e^{- n \phi_u(x)},
\]
where
\begin{align*}
    \psi_n(x,y) =\ &
    \left( \frac{(n x)^{n x} e^{- n x}}{(n x)!} \right)^2
    \\
    \phi_u(x,y) =\ &
    x \big( 2 \log(x) - 2 - \log(\tau_A \tau_C u) \big).
\end{align*}
The minimum of $\phi_u$ is obtained for $x^* = \sqrt{\tau_A \tau_C u}$, which leads to:
\begin{align*}
\psi_n(x^*) &\sim \frac{1}{2 \pi n} \frac{ 1 }{\sqrt{ \tau_A \tau_C u }}, \\
\phi_u(x^*) &= - 2 \sqrt{\tau_A \tau_C u},\\
\hessian_{\phi_u}(x^*) &= \frac{2}{ \sqrt{\tau_A \tau_C u} }. \\
\end{align*}
Applying Lemma~\ref{thm:laplaceMethod}, we obtain
\[
	G_n(u) \sim 
	\frac{1}{2\sqrt{\pi n}} 
	\gamma(u)^{-\frac{1}{4}}
	\exp \left( 2n \sqrt{\gamma(u)} \right),
\]
where $\gamma(u)$ has the same expression as in Section~\ref{sec:positive_coefs}, applied to the particular case $\tau_B = \tau_D = 0$. We then conclude like in Section~\ref{sec:positive_coefs}.


\begin{thebibliography}{4}
\providecommand{\natexlab}[1]{#1}
\providecommand{\url}[1]{\texttt{#1}}
\expandafter\ifx\csname urlstyle\endcsname\relax
  \providecommand{\doi}[1]{doi: #1}\else
  \providecommand{\doi}{doi: \begingroup \urlstyle{rm}\Url}\fi

\bibitem[Flajolet and Sedgewick(2009)]{FS09}
P.~Flajolet and R.~Sedgewick.
\newblock \emph{Analytic Combinatorics}.
\newblock Cambridge University Press, 2009.

\bibitem[Hwang(1998)]{H98}
H.-K. Hwang.
\newblock On convergence rates in the central limit theorems for combinatorial
  structures.
\newblock \emph{European Journal of Combinatorics}, 19\penalty0 (3):\penalty0
  329--343, 1998.

\bibitem[Masoero(2015)]{masoero15}
D.~Masoero.
\newblock A laplace's method for series and the semiclassical analysis of
  epidemiological models.
\newblock \emph{arXiv}, 2015.

\bibitem[Pemantle and Wilson(2013)]{PW13}
R.~Pemantle and M.~C. Wilson.
\newblock \emph{Analytic Combinatorics in Several Variables}.
\newblock Cambridge University Press, 2013.

\end{thebibliography}

\end{document}